\title{Continuous projections onto ideal \\ convergent sequences}
\author[P.~Leonetti]{Paolo Leonetti}
\address{Department of Statistics, Universit\`a ``L. Bocconi'', via Roentgen 1, 20136 Milan, Italy}
\email{leonetti.paolo@gmail.com}
\urladdr{\url{https://sites.google.com/site/leonettipaolo/}} 
\keywords{Meager ideal, $\mathcal{I}$-maximal almost disjoint family, complementability, asymptotic density zero sets, $\mathcal{I}$-convergent sequence.}
\subjclass[2010]{Primary: 40A35, 46B03. Secondary: 54A20, 46B26.}
   \def\MR#1{}
\newtheorem{thm}{Theorem}[section]
\newtheorem{cor}[thm]{Corollary}
\newtheorem{lem}[thm]{Lemma}
\theoremstyle{definition} 
\let\olddefi\defi
\renewcommand{\defi}{\olddefi\normalfont}
\newtheorem{question}{Question}
\let\oldquestion\question
\renewcommand{\question}{\oldquestion\normalfont}
\let\oldexample\example
\renewcommand{\example}{\oldexample\normalfont}
\let\oldrmk\rmk
\renewcommand{\rmk}{\oldrmk\normalfont}
\providecommand{\MR}[1]{}
\providecommand{\MR}{\relax\ifhmode\unskip\space\fi MR }
\providecommand{\href}[2]{#2}
\begin{document}

\maketitle
\thispagestyle{empty}

\begin{abstract} Let $\mathcal{I}\subseteq\mathcal{P}(\omega)$ be a meager ideal. 
Then there are no continuous projections from $\ell_\infty$ onto the set of bounded sequences which are $\mathcal{I}$-convergent to $0$. 
In particular, it follows that the set of bounded sequences statistically convergent to $0$ is not isomorphic to $\ell_\infty$. 
\end{abstract}


\section{Introduction}
A closed subspace $X$ of a Banach space $B$ is said to be complemented in $B$ if 
there exists a continuous projection from $B$ onto $X$.  
It is known that $c_0$, the space of real sequences convergent to $0$, is not complemented in $\ell_\infty$, 
cf. \cite{MR0005777, MR1533692}. 
The aim of this note is to show the ideal analogue of this result. 

Let $\mathcal{I}\subseteq \mathcal{P}(\omega)$ be an ideal, that is, a family closed under subsets and finite unions. 
It is also assumed that 
$\mathrm{Fin}:=[\omega]^{<\omega} \subseteq \mathcal{I}$ and $\omega \notin \mathcal{I}$. 
Set $\mathcal{I}^+:=\mathcal{P}(\omega)\setminus \mathcal{I}$. 
In particular, each $\mathcal{I}$ can be regarded as a subset of the Cantor space $2^\omega$ with the product topology, so we can speak of Borel ideals, $F_\sigma$ ideals, etc. 
An ideal $\mathcal{I}$ is said to be a P-ideal if it is $\sigma$-directed modulo finite sets, i.e., for each sequence $(A_n)$ in $\mathcal{I}$ there exists $A \in \mathcal{I}$ such that $A_n \setminus A$ is finite for all $n \in \omega$. 
We refer to \cite{MR2777744} for a recent survey on ideals and filters.

A real sequence $(x_n)$ is said to be $\mathcal{I}$-convergent to $y$ if $\{n: x_n \notin U\} \in \mathcal{I}$ for all neighborhoods $U$ of $y$. We denote by $c(\mathcal{I})$ [resp. $c_{0}(\mathcal{I})$] the space of real sequences which are $\mathcal{I}$-convergent [resp. $\mathcal{I}$-convergent to $0$]. The set of bounded real $\mathcal{I}$-convergent sequences has been studied, e.g., in \cite{MR2735533, Filipow18, MR2181783}. 
By an easy modification of \cite[Theorem 2.3]{MR2181783}, 
$c_0(\mathcal{I}) \cap \ell_\infty$ is a closed linear subspace of $\ell_\infty$ \textup{(}with the sup norm\textup{)}. 

The question addressed here, posed at the open problem session of the 45th Winter School in Abstract Analysis (Czech Republic, 2017), follows:
\begin{question}\label{q:original}
Is $c_{0}(\mathcal{I}) \cap \ell_\infty$ complemented in $\ell_\infty$? 
\end{question}

Before proving our main result, we recall the following:
\begin{lem}\label{lem:equivalence}
An infinite dimensional subspace $X$ of $\ell_\infty$ is complemented  in $\ell_\infty$ if and only if it is isomorphic to $\ell_\infty$.
\end{lem}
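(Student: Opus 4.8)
The plan is to establish the two implications separately; the ``if'' direction is a soft consequence of the injectivity of $\ell_\infty$, while the ``only if'' direction is the substantive half and will go through Pe\l czy\'nski's decomposition method. For the easy direction, suppose $X$ is isomorphic to $\ell_\infty$ via an isomorphism $U\colon X\to\ell_\infty$. Recall that $\ell_\infty=\ell_\infty(\omega)$ is $1$-injective: for Banach spaces $Z\subseteq W$, every bounded operator $T\colon Z\to\ell_\infty$ admits an extension $\widetilde T\colon W\to\ell_\infty$ with $\|\widetilde T\|=\|T\|$, obtained by applying the Hahn--Banach theorem to each coordinate functional $z\mapsto(Tz)_n$. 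Applying this to $U$ regarded as an operator on the subspace $X\subseteq\ell_\infty$ produces $\widetilde U\colon\ell_\infty\to\ell_\infty$ extending $U$, and then $P:=U^{-1}\widetilde U\colon\ell_\infty\to X$ is bounded with $Px=U^{-1}Ux=x$ for every $x\in X$, so it is a continuous projection of $\ell_\infty$ onto $X$.

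For the ``only if'' direction, let $P\colon\ell_\infty\to X$ be a continuous projection, with $X$ infinite dimensional. First I would observe that $X$ is itself injective: any bounded operator into $X$ from a subspace of a Banach space extends into $\ell_\infty$ by injectivity of $\ell_\infty$, and post-composing the extension with $P$ yields an extension into $X$; hence $X$ is $\|P\|$-injective. Being infinite dimensional and injective, $X$ contains an isomorphic copy of $c_0$ --- this is classical; for instance, $X$ is an $\mathcal L_\infty$-space and every infinite dimensional $\mathcal L_\infty$-space contains $c_0$. Call this copy $Y$, so that $Y\subseteq X\subseteq\ell_\infty$. The crux is now to enlarge $Y$ to a subspace $Z$ with $Y\subseteq Z\subseteq X$, $Z\cong\ell_\infty$, and $Z$ complemented in $\ell_\infty$. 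This is precisely the technical heart of Lindenstrauss's analysis of complemented subspaces of $m=\ell_\infty$, and I expect it to be the main obstacle, since it is where the genuine work resides: a gliding-hump/disjointification argument on the coordinates, or equivalently the Rosenthal-type lemma on relatively disjoint families of measures.

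Granting such a $Z$, the proof closes quickly. If $Q\colon\ell_\infty\to Z$ is a bounded projection, then $Q|_X\colon X\to Z$ is a bounded projection (it fixes $Z\subseteq X$ pointwise), so $Z$ is complemented in $X$; thus $\ell_\infty\cong Z$ is isomorphic to a complemented subspace of $X$, while by hypothesis $X$ is isomorphic to a complemented subspace of $\ell_\infty$. Since the countable $\ell_\infty$-sum $\bigl(\bigoplus_{n\in\omega}\ell_\infty\bigr)_{\ell_\infty}$ is isometrically isomorphic to $\ell_\infty(\omega\times\omega)\cong\ell_\infty$, Pe\l czy\'nski's decomposition method, applied with $A=\ell_\infty$ and $B=X$, yields $X\cong\ell_\infty$. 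Alternatively, one may simply quote the ``only if'' direction as Lindenstrauss's theorem that $\ell_\infty$ is a prime space; the outline above merely records how its ingredients assemble.
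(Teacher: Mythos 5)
Your proposal is correct and matches the paper's approach: the paper's entire proof is a citation to Albiac--Kalton, where Proposition 2.5.2 (injectivity of $\ell_\infty$, giving your ``if'' direction via coordinatewise Hahn--Banach) and Theorem 5.6.5 (Lindenstrauss's theorem that $\ell_\infty$ is prime, giving your ``only if'' direction via Rosenthal's $c_0$-to-$\ell_\infty$ lemma and Pe\l czy\'nski decomposition) are exactly the two results you unpack. The one step you leave as a black box --- producing a copy of $\ell_\infty$ inside the infinite dimensional complemented subspace $X$ --- is precisely the content of the cited theorem, so your fallback of quoting primeness puts you on the same footing as the paper; note only that once $Z\subseteq X$ with $Z\cong\ell_\infty$ is found, its complementation in $\ell_\infty$ is automatic from your ``if'' direction and need not be arranged separately.
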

\begin{proof}
See \cite[Proposition 2.5.2 and Theorem 5.6.5]{MR2192298}. 
\end{proof}

Hence, Question \ref{q:original} can be reformulated as:
\begin{question}\label{q:new}
Is ${c}_{0}(\mathcal{I})\cap \ell_\infty$ isomorphic to $\ell_\infty$? 
\end{question}

We will prove that the answer is negative for a large class of ideals. To state our result, we recall that a family $\mathscr{A} \subseteq \mathcal{I}^+$ is said to be $\mathcal{I}$\emph{-maximal-almost-disjoint} (in short, $\mathcal{I}$-$\mathrm{mad}$) if $\mathscr{A}$ is a maximal family (with respect to inclusion) such that $A \cap B \in \mathcal{I}$ for all distinct $A,B \in \mathscr{A}$, so that for each $X \in \mathcal{I}^+$ there exists $A \in \mathscr{A}$ such that $X \cap A \in \mathcal{I}^+$. (The minimal cardinality $\mathfrak{a}(\mathcal{I})$ of an $\mathcal{I}$-$\mathrm{mad}$ has been studied in the literature: e.g., it is known that, 
if $\mathcal{I}$ is an analytic P-ideal, $\mathfrak{a}(\mathcal{I})>\omega$ if and only if $\mathcal{I}$ is $F_\sigma$, 
cf. \cite{MR823775, MR2537837}.)

Our main result follows:
\begin{thm}\label{thm:complementability}
Let $\mathcal{I}$ be an ideal for which there exists an uncountable $\mathcal{I}$-$\mathrm{mad}$ family. Then $c_{0}(\mathcal{I}) \cap \ell_\infty$ is not complemented in $\ell_\infty$.
\end{thm}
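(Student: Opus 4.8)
The plan is to adapt the classical Phillips--Sobczyk argument that $c_0$ is not complemented in $\ell_\infty$ (see \cite{MR0005777, MR1533692}), systematically replacing the almost disjoint family it uses by an uncountable $\mathcal{I}$-$\mathrm{mad}$ family and the word ``finite'' by ``belonging to $\mathcal{I}$''. Throughout, write $Z:=c_{0}(\mathcal{I})\cap\ell_\infty$ and, for $A\subseteq\omega$, let $\mathbf{1}_A\in\ell_\infty$ be its characteristic function. I would first record two elementary facts: \textup{(i)} if $S\in\mathcal{I}$, then every bounded sequence supported on $S$ belongs to $Z$, since $\{n:x_n\notin U\}\subseteq S\in\mathcal{I}$ for every neighborhood $U$ of $0$; and \textup{(ii)} if $A\in\mathcal{I}^+$, then $\mathbf{1}_A\notin Z$, since $\{n:(\mathbf{1}_A)_n\notin(-\tfrac12,\tfrac12)\}=A\notin\mathcal{I}$. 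These two facts pin down precisely where the ideal hypotheses will enter.

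Assume, towards a contradiction, that there is a bounded projection $P\colon\ell_\infty\to Z$. I would then introduce the coordinate defect functionals $g_k\in(\ell_\infty)^*$ defined by $g_k(x):=x_k-(Px)_k$, noting $\|g_k\|\le 1+\|P\|$ and, crucially, that $Z=\bigcap_{k\in\omega}\ker g_k$ (since $P$ is the identity on $Z$ and has range contained in $Z$). Next, fixing an uncountable $\mathcal{I}$-$\mathrm{mad}$ family $\mathscr{A}=\{A_\xi:\xi\in\Xi\}$, the main step is to prove that for every $k\in\omega$ and $\varepsilon>0$ the set $\{\xi\in\Xi:|g_k(\mathbf{1}_{A_\xi})|\ge\varepsilon\}$ is finite. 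For this I would take distinct $\xi_1,\dots,\xi_m\in\Xi$ witnessing the inequality, choose signs $\theta_j\in\{-1,1\}$ with $\theta_j\,g_k(\mathbf{1}_{A_{\xi_j}})\ge\varepsilon$, and form $y:=\sum_{j=1}^m\theta_j\mathbf{1}_{A_{\xi_j}}$. Decomposing $y=y'+y''$ along the overlap set $B:=\bigcup_{1\le i<j\le m}(A_{\xi_i}\cap A_{\xi_j})$, one has $B\in\mathcal{I}$ (each $A_{\xi_i}\cap A_{\xi_j}\in\mathcal{I}$ by $\mathcal{I}$-almost-disjointness, and $\mathcal{I}$ is closed under finite unions), $\|y'\|_\infty\le 1$ (off $B$ at most one summand is active), and $y''\in Z$ by \textup{(i)}, so that $g_k(y'')=0$; the chain
\begin{equation*}
m\varepsilon\le\sum_{j=1}^m\theta_j\,g_k(\mathbf{1}_{A_{\xi_j}})=g_k(y)=g_k(y')\le\|g_k\|\,\|y'\|_\infty\le 1+\|P\|
\end{equation*}
then bounds $m$ and proves the claim.

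Finally I would conclude: the set $\{\xi\in\Xi:\exists k\in\omega,\ g_k(\mathbf{1}_{A_\xi})\ne0\}$ is a countable union of finite sets, hence countable, so by uncountability of $\Xi$ there is some $\xi^\star$ with $g_k(\mathbf{1}_{A_{\xi^\star}})=0$ for all $k\in\omega$, that is, $\mathbf{1}_{A_{\xi^\star}}\in Z$; but $A_{\xi^\star}\in\mathcal{I}^+$, contradicting \textup{(ii)}. The one place where genuine care is needed --- the main obstacle --- is the decomposition $y=y'+y''$: in the classical proof the overlap set $B$ is finite, so $y''\in c_0$ for free, whereas here one must invoke precisely the $\mathcal{I}$-almost-disjointness to get $B\in\mathcal{I}$ and hence $y''\in Z$. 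Everything else is a faithful transcription of the classical argument, so I anticipate no further difficulties; in particular, combined with Lemma \ref{lem:equivalence}, this also records that $Z$ is not isomorphic to $\ell_\infty$.
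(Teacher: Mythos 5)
Your proof is correct and follows essentially the same route as the paper's: both run the Whitley-style argument, forming finite signed sums of indicators of members of the $\mathcal{I}$-mad family, splitting off the overlap set (which lies in $\mathcal{I}$ and is therefore annihilated by $I-P$), and using a counting/pigeonhole argument over the uncountable index set to produce some $A_{\xi^\star}\in\mathcal{I}^+$ with $\mathbf{1}_{A_{\xi^\star}}\in c_0(\mathcal{I})\cap\ell_\infty$, a contradiction. Your presentation via the coordinate functionals $g_k$ is just a cleaner packaging of the same estimate the paper carries out with $T=I-\pi$.
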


It can be shown that, if $\mathcal{I}$ is a meager ideal, 
there is an $\mathcal{I}$-$\mathrm{mad}$ family of cardinality $\mathfrak{c}$, see Lemma \ref{lem:meagerc} below. In particular 
\begin{cor}\label{cor:meager}
$c_{0}(\mathcal{I}) \cap \ell_\infty$ is not complemented in \textup{(}and not isomorphic to\textup{)} $\ell_\infty$ whenever $\mathcal{I}$ is meager.
\end{cor}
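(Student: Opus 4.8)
The plan is to derive a contradiction from the existence of a continuous projection $P\colon \ell_\infty \to c_0(\mathcal{I})\cap\ell_\infty$ by exploiting an uncountable $\mathcal{I}$-$\mathrm{mad}$ family $\mathscr{A}$. First I would fix such a projection $P$ with operator norm $\|P\| = M < \infty$. For each $A \in \mathscr{A}$, since $A \in \mathcal{I}^+$, the characteristic function $\bm{1}_A$ is \emph{not} $\mathcal{I}$-convergent to $0$ (indeed $\{n : (\bm{1}_A)_n \notin (-\tfrac12,\tfrac12)\} = A \notin \mathcal{I}$), so $Q\bm{1}_A := \bm{1}_A - P\bm{1}_A$ is a nonzero element of $\ell_\infty$; more usefully, since $P\bm{1}_A \in c_0(\mathcal{I})$, the sequence $Q\bm{1}_A$ agrees with $\bm{1}_A$ modulo a sequence that is $\mathcal{I}$-small, and in particular $\{n \in A : |(Q\bm{1}_A)_n| > \tfrac12\} \in \mathcal{I}^+$ while $\{n \notin A : |(Q\bm{1}_A)_n| > \tfrac12\} \in \mathcal{I}$. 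The key point is that $Q$ restricted to the (closed) subspace spanned by $\{\bm{1}_A : A \in \mathscr{A}\}$ together with $c_0(\mathcal{I})\cap\ell_\infty$ behaves like the quotient map, and $Q$ "sees" each $A$ on a set outside $\mathcal{I}$.

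Next I would use a counting/pigeonhole argument over the uncountable family. For each $A \in \mathscr{A}$ pick $\varepsilon_A > 0$ and an infinite set $S_A \subseteq A$, $S_A \in \mathcal{I}^+$, on which $|(Q\bm{1}_A)_n| \ge \varepsilon_A$; by uncountability of $\mathscr{A}$ there is an uncountable subfamily $\mathscr{A}'$ and a single $\varepsilon>0$ with $\varepsilon_A = \varepsilon$ (up to shrinking) for all $A \in \mathscr{A}'$. Now take finitely many distinct $A_1,\dots,A_k \in \mathscr{A}'$ and consider signs $\theta \in \{-1,+1\}^k$; apply $P$ to $x_\theta := \sum_{i\le k}\theta_i \bm{1}_{A_i}$. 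Since the $A_i$ are pairwise disjoint modulo $\mathcal{I}$, on $S_{A_j}$ (after removing the $\mathcal{I}$-small overlaps $S_{A_j}\cap A_i$ for $i\ne j$, which still leaves a set in $\mathcal{I}^+$) we have $x_\theta = \theta_j$, and comparing with $Px_\theta \in c_0(\mathcal{I})$ forces, on an $\mathcal{I}$-positive subset of $S_{A_j}$, that $(Qx_\theta)_n$ is within $o(1)$ of $\theta_j \cdot$(something of size $\ge \varepsilon$); choosing signs adversarially makes $\|Qx_\theta\|_\infty$, hence $\|Px_\theta\|_\infty$, grow — but $\|Px_\theta\|_\infty \le M\|x_\theta\|_\infty = M$, a contradiction once $k$ is large. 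More carefully, one shows $Q$ acts on the span of $k$ such characteristic functions essentially like a copy of $\ell_\infty^k$ with its natural norm, so $\|Q\| \ge$ an unbounded quantity; since $\|Q\| \le 1 + M$, this is absurd.

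The main obstacle — and the step requiring genuine care — is making precise the claim that $Q = \mathrm{Id} - P$ behaves like an isometric copy of $\ell_\infty^k$ (or at least $c_0^k$ with a uniform lower bound) on these characteristic functions, \emph{uniformly} over finite subsets of the uncountable family, despite the fact that the "witnessing" index sets $S_A$ depend on $A$ and the pairwise intersections, though in $\mathcal{I}$, are not literally empty. The resolution is that $\mathcal{I}$-smallness is preserved under finite unions, so for any finite $\{A_1,\dots,A_k\}$ the bad set $\bigcup_{i\ne j}(A_i\cap A_j) \in \mathcal{I}$, and an $\mathcal{I}^+$ set minus an $\mathcal{I}$ set is still $\mathcal{I}^+$ (hence nonempty); combined with the fact that $P\bm{1}_{A_i} \in c_0(\mathcal{I})$ means each "error" sequence is $\mathcal{I}$-null, a finite sum of them is still $\mathcal{I}$-null, so on an $\mathcal{I}^+$ (in particular nonempty) set of coordinates all $k$ errors are simultaneously small. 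This lets the adversarial sign choice go through and delivers $\|P\| = \infty$, contradicting continuity. Finally, Corollary \ref{cor:meager} is immediate: by Lemma \ref{lem:meagerc} a meager ideal admits an $\mathcal{I}$-$\mathrm{mad}$ family of size $\mathfrak{c} > \omega$, so Theorem \ref{thm:complementability} applies, and non-complementability together with Lemma \ref{lem:equivalence} (noting $c_0(\mathcal{I})\cap\ell_\infty$ is infinite dimensional) yields that it is not isomorphic to $\ell_\infty$.
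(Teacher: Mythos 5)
Your closing step is exactly the paper's route and is correct: Lemma \ref{lem:meagerc} gives an $\mathcal{I}$-mad family of size $\mathfrak{c}>\omega$ for meager $\mathcal{I}$, Theorem \ref{thm:complementability} then gives non-complementability, and Lemma \ref{lem:equivalence} (with the observation that $c_0(\mathcal{I})\cap\ell_\infty$ is an infinite-dimensional closed subspace) upgrades this to non-isomorphism. The problem is the proof you supply for Theorem \ref{thm:complementability} itself, where there is a genuine gap: the claimed blow-up of $\|Qx_\theta\|_\infty$ never happens. Your witnessing sets $S_A$ sit \emph{inside} $A$, and distinct members of the family are almost disjoint modulo $\mathcal{I}$; so after discarding the overlaps and the $\mathcal{I}$-small set where $Px_\theta$ is large, at each surviving coordinate $n\in S_{A_j}$ you only get $|(Qx_\theta)_n|\approx|\theta_j|=1$, for \emph{every} choice of signs. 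That yields $\|Qx_\theta\|_\infty\ge 1/2-o(1)$ and nothing more, which is perfectly consistent with $\|Q\|\le 1+M$ (indeed $Qx_\theta=Q(x_\theta\upharpoonright I^c)$ with $\|x_\theta\upharpoonright I^c\|_\infty\le 1$ forces $\|Qx_\theta\|_\infty\le\|Q\|$, so no adversarial signs can make it grow; and exhibiting uniform copies of $\ell_\infty^k$ inside $\ell_\infty$ is not contradictory either, since $\ell_\infty$ contains isometric copies of itself). A telling symptom is that your argument never genuinely uses uncountability: one may always take $\varepsilon_A=1/2$, so your pigeonhole is vacuous, yet the hypothesis of an \emph{uncountable} mad family must enter somewhere (the theorem fails for maximal ideals).

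The missing idea, which is the paper's actual mechanism (Whitley's argument), is to pigeonhole on the \emph{coordinate} at which $Q\bm{1}_A$ is large, not on a subset of $A$. Since $Q\bm{1}_{A_j}\neq 0$, choose $m_j,k_j$ with $|(Q\bm{1}_{A_j})_{m_j}|\ge 2^{-k_j}$; there are only countably many pairs $(m,k)$, so uncountably many $j$ share a single fixed pair, and after replacing $\bm{1}_{A_j}$ by its negative one may assume $(Q\bm{1}_{A_j})_m\ge 2^{-k}$ for all $j$ in an infinite subfamily. This is where uncountability is indispensable. Then for a finite sum $s_F=\sum_{j\in F}\bm{1}_{A_j}$, linearity gives $(Qs_F)_m=\sum_{j\in F}(Q\bm{1}_{A_j})_m\ge |F|\,2^{-k}$ --- the contributions of all members of $F$ accumulate at the one coordinate $m$ --- while your restriction trick gives $\|Qs_F\|_\infty=\|Q(s_F\upharpoonright I^c)\|_\infty\le\|Q\|$, forcing $|F|\le 2^k\|Q\|$ and the desired contradiction. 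Equivalently: the single bounded functional $x\mapsto (Qx)_m$ takes values $\ge 2^{-k}$ on infinitely many essentially disjointly supported norm-one vectors whose partial sums remain bounded in norm, which is impossible. Without isolating such a coordinate functional, your scheme cannot close.
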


As an important example, the family of asymptotic density zero sets $\mathcal{Z}:=\{S\subseteq \omega: |S\cap [1,n]|/n\to 0\}$ is an analytic P-ideal, hence meager. Therefore:
\begin{cor}\label{cor:stat}
The set of bounded real sequences statistically convergent to $0$ \textup{(}i.e., $c_0(\mathcal{Z})$\textup{)} is not is isomorphic to $\ell_\infty$.
\end{cor}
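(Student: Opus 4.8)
The plan is to deduce Corollary \ref{cor:stat} directly from the general machinery developed above, since $\mathcal{Z}$ falls squarely under the hypotheses of Corollary \ref{cor:meager}. Concretely, I would first confirm the single ideal-specific fact that is needed, namely that $\mathcal{Z}$ is a meager ideal. This is already recorded in the paragraph preceding the statement: $\mathcal{Z}$ is an analytic P-ideal, hence in particular a Borel subset of $2^\omega$, and therefore has the Baire property. By the classical dichotomy for ideals with the Baire property (the Jalali-Naini--Talagrand characterization of meager filters, equivalently Sierpi\'nski's theorem), every ideal $\mathcal{I}\supseteq\mathrm{Fin}$ with the Baire property is meager; applied to $\mathcal{Z}$ this yields meagerness. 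Alternatively, one can exhibit directly the interval partition witnessing meagerness from the definition of asymptotic density.

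With meagerness in hand, Lemma \ref{lem:meagerc} provides a $\mathcal{Z}$-$\mathrm{mad}$ family of cardinality $\mathfrak{c}$, in particular an uncountable one, so Theorem \ref{thm:complementability} applies and shows that $c_{0}(\mathcal{Z})\cap\ell_\infty$ is not complemented in $\ell_\infty$. The decisive step --- the one the statement is actually about --- is then to convert non-complementability into non-isomorphism. For this I would invoke Lemma \ref{lem:equivalence}: the subspace $c_{0}(\mathcal{Z})\cap\ell_\infty$ is closed in $\ell_\infty$ (by the remark following \cite[Theorem 2.3]{MR2181783}) and is infinite-dimensional, since it contains $c_0$ --- indeed every eventually-zero sequence has finite, hence density-zero, support and is therefore statistically null. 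Lemma \ref{lem:equivalence} thus asserts that this subspace is complemented in $\ell_\infty$ if and only if it is isomorphic to $\ell_\infty$; as we have just seen it is not complemented, it cannot be isomorphic to $\ell_\infty$, which is exactly Corollary \ref{cor:stat}.

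I expect no genuine obstacle in this argument. The two reductions are purely formal, the infinite-dimensionality hypothesis of Lemma \ref{lem:equivalence} is patent, and the only substantive input, the meagerness of $\mathcal{Z}$, is a standard consequence of its being a Borel ideal and is already flagged in the text. The logical chain \emph{meager} $\Rightarrow$ \emph{uncountable $\mathcal{Z}$-$\mathrm{mad}$ family} $\Rightarrow$ \emph{not complemented} $\Rightarrow$ (via Lemma \ref{lem:equivalence}) \emph{not isomorphic to $\ell_\infty$} is precisely the content above, and simply re-instantiates Corollary \ref{cor:meager} in the special case $\mathcal{I}=\mathcal{Z}$.
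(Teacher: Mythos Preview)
Your proposal is correct and follows exactly the paper's route: the text preceding the corollary records that $\mathcal{Z}$ is an analytic P-ideal, hence meager, and then Corollary~\ref{cor:stat} is obtained as the instance $\mathcal{I}=\mathcal{Z}$ of Corollary~\ref{cor:meager}, i.e., via the chain Lemma~\ref{lem:meagerc} $\Rightarrow$ Theorem~\ref{thm:complementability} $\Rightarrow$ Lemma~\ref{lem:equivalence}. Your additional verification that $c_{0}(\mathcal{Z})\cap\ell_\infty$ is infinite-dimensional (needed for Lemma~\ref{lem:equivalence}) is a welcome explicitness but introduces nothing new.
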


Lastly, we obtain an analogue of the main result in \cite{MR1532370} (for summability matrices):
\begin{cor}\label{cor:corsummability}
$c$ is complemented in $c(\mathcal{I}) \cap \ell_\infty$ if and only if $\mathcal{I}=\mathrm{Fin}$.
\end{cor}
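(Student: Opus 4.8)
The plan is to prove the two implications separately; the forward one is trivial and the reverse reduces, after a short manipulation, to the classical fact that $c_0$ is not complemented in $\ell_\infty$ (cf.\ \cite{MR0005777, MR1533692}). If $\mathcal{I}=\mathrm{Fin}$ then a sequence is $\mathcal{I}$-convergent precisely when it converges, and convergent sequences are bounded, so $c(\mathcal{I})\cap\ell_\infty=c$ and the identity is a continuous projection onto $c$; thus $c$ is complemented.

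For the converse I argue contrapositively: assume $P\colon c(\mathcal{I})\cap\ell_\infty\to c$ is a continuous projection and deduce $\mathcal{I}=\mathrm{Fin}$. Write $\mathbf{1}=(1,1,\dots)$ and let $\ell$ be the $\mathcal{I}$-limit functional on $c(\mathcal{I})\cap\ell_\infty$, which is linear with $|\ell(x)|\le\|x\|_\infty$, hence continuous, and which agrees with the ordinary limit on $c$. Then $Q(x):=x-\ell(x)\mathbf{1}$ is a continuous projection of $c(\mathcal{I})\cap\ell_\infty$ onto $c_{0}(\mathcal{I})\cap\ell_\infty$, it is the identity on $c_{0}(\mathcal{I})\cap\ell_\infty$, and it maps $c$ onto $c_0$. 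Consequently the restriction of $Q\circ P$ to $c_{0}(\mathcal{I})\cap\ell_\infty$ is a continuous projection of $c_{0}(\mathcal{I})\cap\ell_\infty$ onto $c_0$ (it fixes $c_0$ because $P$ fixes $c\supseteq c_0$ and $Q$ fixes $c_{0}(\mathcal{I})\cap\ell_\infty\supseteq c_0$). Hence it suffices to show that $c_0$ is \emph{not} complemented in $c_{0}(\mathcal{I})\cap\ell_\infty$ when $\mathcal{I}\neq\mathrm{Fin}$.

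So suppose $\mathcal{I}\neq\mathrm{Fin}$ and fix an infinite $A\in\mathcal{I}$. Every bounded sequence supported on $A$ is $\mathcal{I}$-convergent to $0$, since for each neighbourhood $U\ni 0$ one has $\{n:x_n\notin U\}\subseteq A\in\mathcal{I}$; therefore the closed subspace $\ell_\infty(A):=\{x\in\ell_\infty:x_n=0\text{ for all }n\notin A\}$ is contained in $c_{0}(\mathcal{I})\cap\ell_\infty$, and $\ell_\infty(A)\cap c_0=c_0(A)$, the space of null sequences supported on $A$. Enumerating $A$ gives an isometric isomorphism $\ell_\infty(A)\cong\ell_\infty$ carrying $c_0(A)$ onto $c_0$. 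If now $S$ were a continuous projection of $c_{0}(\mathcal{I})\cap\ell_\infty$ onto $c_0$, and $T\colon c_0\to c_0(A)$ denotes the norm-one restriction projection $x\mapsto x\cdot\mathbf{1}_A$, then $T\circ S$ restricted to $\ell_\infty(A)$ would be a continuous projection of $\ell_\infty(A)$ onto $c_0(A)$ — it is the identity on $c_0(A)$ because $S$ is the identity on $c_0\supseteq c_0(A)$ and $T$ is the identity on $c_0(A)$. Transported through the isomorphism, this is a continuous projection of $\ell_\infty$ onto $c_0$, which is impossible. Thus $\mathcal{I}$ has no infinite member, i.e.\ $\mathcal{I}=\mathrm{Fin}$.

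I expect the only real point of substance to be the reduction step — recognising that bounded sequences supported on a set of $\mathcal{I}$ are automatically $\mathcal{I}$-null, which plants an isometric copy of the pair $(\ell_\infty,c_0)$ inside $(c_{0}(\mathcal{I})\cap\ell_\infty,\,c_0)$; the rest is a routine check that the three maps above are continuous projections fixing the appropriate subspaces. Notably, this proof uses none of the earlier apparatus (uncountable $\mathcal{I}$-mad families, meagreness), only the non-complementability of $c_0$ in $\ell_\infty$.
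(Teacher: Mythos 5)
Your proof is correct and follows essentially the same route as the paper: both hinge on the observation that the bounded sequences supported on an infinite $A\in\mathcal{I}$ form an isometric copy of the pair $(\ell_\infty,c_0)$ inside $c(\mathcal{I})\cap\ell_\infty$, and both derive a contradiction by composing the hypothesized projection onto $c$ with a further projection down to the null sequences supported on $A$. The only (harmless) difference is that you build that auxiliary projection explicitly, via the $\mathcal{I}$-limit functional $x\mapsto x-\ell(x)\mathbf{1}$ followed by multiplication by $\mathbf{1}_A$, whereas the paper obtains a projection from $c$ onto its copy of $c_0$ by invoking Sobczyk's theorem.
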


It is worth noting that Theorem \ref{thm:complementability} cannot be extended to all ideals $\mathcal{I}$. Indeed, if $\mathcal{I}$ is maximal, then the set of bounded $\mathcal{I}$-convergent sequences, which is isomorphic to $c_0(\mathcal{I}) \cap \ell_\infty$, is exactly $\ell_\infty$.

\section{Preliminaries and Proofs}

Thanks to Lemma \ref{lem:equivalence}, a negative question to Question \ref{q:original} would follow if $c_0(\mathcal{I}) \cap \ell_\infty$ was separable (indeed $\ell_\infty$ is nonseparable, hence they cannot be isomorphic). However, this works only if $\mathcal{I}=\mathrm{Fin}$:

\begin{lem}\label{lem:c0starsepar}
$c_0(\mathcal{I})$ is separable if and only if $\mathcal{I}=\mathrm{Fin}$.
\end{lem}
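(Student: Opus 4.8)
The plan is to prove both directions. For the easy direction, suppose $\mathcal{I}=\mathrm{Fin}$; then $c_0(\mathrm{Fin})$ is just $c_0$, the space of real sequences converging to $0$ in the usual sense, which is a well-known separable Banach space (the finitely supported rational sequences form a countable dense subset, and this works equally well for the $\sup$-norm on the bounded part or for any reasonable complete metric on all of $c_0$). So I would only spend a line on this.

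For the nontrivial direction, I would argue the contrapositive: if $\mathcal{I}\neq\mathrm{Fin}$, then $c_0(\mathcal{I})$ is not separable. Since $\mathcal{I}\neq\mathrm{Fin}$ and $\omega\notin\mathcal{I}$, there is an infinite set $A\in\mathcal{I}$ whose complement $\omega\setminus A$ is also infinite (if $A\in\mathcal{I}$ is infinite with cofinite complement, replace it by an infinite coinfinite subset, which still lies in $\mathcal{I}$). Now I would produce an uncountable family of points in $c_0(\mathcal{I})$ that are pairwise at distance at least $1$ in the $\sup$-metric. The idea: fix a bijection between $A$ and $\omega$, and for each subset $E\subseteq\omega$ let $\mathbf{1}_{A_E}$ be the indicator of the corresponding subset $A_E\subseteq A$. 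Every such sequence is supported on $A\in\mathcal{I}$, hence is $\mathcal{I}$-convergent to $0$ (the set where it differs from $0$ is contained in $A\in\mathcal{I}$), and it is bounded. For distinct $E\neq F$, the sequences $\mathbf{1}_{A_E}$ and $\mathbf{1}_{A_F}$ differ by $1$ at any coordinate in the symmetric difference, so their $\sup$-distance is $1$. Since there are $\mathfrak{c}$ many subsets $E\subseteq\omega$, this gives an uncountable $1$-separated subset of $c_0(\mathcal{I})\cap\ell_\infty$, so $c_0(\mathcal{I})$ is not separable.

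One should be slightly careful about which topology on $c_0(\mathcal{I})$ is meant, since $c_0(\mathcal{I})$ itself (the space of all, not necessarily bounded, $\mathcal{I}$-convergent-to-$0$ sequences) is usually regarded with the topology of a metric like $d(x,y)=\sum_n 2^{-n}\min(1,|x_n-y_n|)$ rather than the $\sup$-norm; but the same family $\{\mathbf{1}_{A_E} : E\subseteq\omega\}$ still works, since for distinct $E,F$ the two indicators differ by $1$ at the least element of the symmetric difference of $A_E$ and $A_F$, which again forces a positive lower bound on $d(\mathbf{1}_{A_E},\mathbf{1}_{A_F})$ depending only on that least index — but to get a uniform lower bound one instead fixes the support to be an initial segment of $A$ under its enumeration, or simply notes that an uncountable set cannot be separable because for a fixed $n_0$ uncountably many of the $A_E$ agree on the first $n_0$ elements of $A$ while ranging over all patterns afterwards, still yielding $1$-separation beyond index $n_0$. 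The cleanest route is to restrict to $c_0(\mathcal{I})\cap\ell_\infty$ with the $\sup$-norm as in the rest of the paper, where the $1$-separation is immediate; I expect the main (very minor) obstacle is just pinning down this topological convention, after which the construction is routine.
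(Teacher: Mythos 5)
Your argument is correct and essentially identical to the paper's: the paper likewise fixes an infinite $A\in\mathcal{I}$ and observes that the indicator sequences of the continuum many infinite subsets of $A$ all lie in $c_0(\mathcal{I})$ and are pairwise at sup-distance $1$, so the balls of radius $1/2$ around them form an uncountable disjoint family of open sets and separability fails. One small caveat on your side remark: under the product-type metric $d(x,y)=\sum_n 2^{-n}\min(1,|x_n-y_n|)$ the space $c_0(\mathcal{I})$ is \emph{always} separable (it is a subspace of the separable metrizable space $\mathbb{R}^\omega$), so no repair of the construction can work for that topology; this is immaterial, though, since --- as you conclude, and as the paper implicitly assumes by speaking of balls of radius $1/2$ --- the intended topology is the sup-norm.
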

\begin{proof}
The if part is known. 
Conversely, let us suppose that there exists $A \in \mathcal{I} \cap [\omega]^\omega$. For each $X\subseteq \omega$ and $\varepsilon>0$, let $B(\bm{1}_X,\varepsilon)$ be the open ball with center $\bm{1}_X$ and radious $\varepsilon$. The collection 
$
\mathscr{B}:=\left\{B(\bm{1}_X,1/2): X \in [A]^\omega\right\}
$ 
is an uncountable family of nonempty open sets which are pairwise disjoint, hence $c_0(\mathcal{I})$ is not separable.
\end{proof}

At this point, recall the following characterization, see \cite{MR579439} and \cite[Theorem 4.1.2]{MR1350295}:
\begin{lem}\label{lem:meagercharac}
$\mathcal{I}$ is a meager ideal if and only if there exists a finite-to-one function $f:\omega \to \omega$ such that $f^{-1}(A) \in \mathcal{I}$ if and only if $A$ is finite.
\end{lem}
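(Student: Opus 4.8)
The final statement to prove is Lemma~\ref{lem:meagercharac}, the Talagrand--Jalali-Naini characterization of meager ideals. I would prove it in two directions. For the easy direction, suppose a finite-to-one $f:\omega\to\omega$ exists with $f^{-1}(A)\in\mathcal{I}\iff A$ finite. Then $\mathcal{I}\supseteq\{f^{-1}(A):A\in\mathrm{Fin}\}$, and I claim this already forces $\mathcal{I}$ to be meager. The point is that the sets $F_k:=f^{-1}(\{0,1,\dots,k\})$ form an increasing sequence of finite sets with union $\omega$, each of which belongs to $\mathcal{I}$, and every $X\in\mathcal{I}$ must have the property that $\{n:X\cap(F_{n}\setminus F_{n-1})=F_n\setminus F_{n-1}\}$... — more cleanly: I would exhibit $\mathcal{I}$ as contained in a meager set by noting that $X\in\mathcal{I}$ implies $f(X)$ is coinfinite, so $\mathcal{I}\subseteq\{X:f(X)\neq\omega\}=\bigcup_k\{X:k\notin f(X)\}$; each set $\{X:k\notin f(X)\}=\{X:X\cap f^{-1}(k)=\emptyset\}$ is closed and nowhere dense in $2^\omega$ (nowhere dense because $f^{-1}(k)$ is finite and nonempty for all large $k$, so one can always flip finitely many coordinates to escape it), giving $\mathcal{I}$ meager.

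For the converse, assume $\mathcal{I}$ is meager, so $\mathcal{I}\subseteq\bigcup_n C_n$ with each $C_n$ closed nowhere dense; WLOG the $C_n$ are increasing. Since $\mathrm{Fin}\subseteq\mathcal{I}$ is dense in $2^\omega$ and $\mathcal{I}$ is a dense subgroup of the group $(2^\omega,\triangle)$, a standard argument (the Talagrand criterion) lets me replace the cover by a single "combinatorially nice" object: I would build by recursion a partition of $\omega$ into consecutive finite intervals $I_0<I_1<I_2<\cdots$ together with a choice of finite sets $D_k\subseteq I_k$ such that any $X\subseteq\omega$ containing $D_k$ for infinitely many $k$ lies outside $\mathcal{I}$. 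Concretely, using that each $C_n$ is nowhere dense one finds, given a finite initial segment already fixed on $I_0\cup\cdots\cup I_{k-1}$, an interval $I_k$ extending it and a pattern on $I_k$ avoiding $C_k$ (in a way that persists: avoiding $C_k$ "above" the stem); encoding these patterns via their intersection with $I_k$ yields the sets $D_k$. Then define $f:\omega\to\omega$ by sending all of $I_k$ to $k$ — wait, that is finite-to-one but does not obviously do the job; instead I send $I_k$ bijectively-in-blocks so that $f^{-1}(m)$ for $m$ in the range corresponding to block $k$ are the "cells" of $D_k$ versus $I_k\setminus D_k$. The cleanest formulation: let $f$ be constant equal to $k$ on each $I_k$ after refining so that "$A$ infinite $\Rightarrow f^{-1}(A)$ contains infinitely many whole intervals $I_k$ $\Rightarrow f^{-1}(A)\notin\mathcal{I}$" because containing infinitely many $I_k$ meets every $C_n$-avoidance requirement. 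One checks $f$ is finite-to-one (each $I_k$ is finite) and finite-to-one-ness plus the interval structure give both implications.

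**Main obstacle.** The genuinely delicate step is the recursive construction in the converse: extracting, from an arbitrary increasing sequence of nowhere-dense closed sets covering $\mathcal{I}$, a partition into intervals such that "containing infinitely many complete intervals" suffices to escape $\mathcal{I}$. This requires that nowhere-density of $C_k$ be used in the strong form "for every finite stem $s$ there is a finite extension $t\sqsupseteq s$ with $[t]\cap C_k=\emptyset$", and then leveraging that $\mathcal{I}$ is closed under finite modifications and finite unions to pass from "escapes $C_k$ past the stem" to "escapes $C_k$ entirely once it contains the relevant block". Getting the bookkeeping right so that a single $f$ works simultaneously against all $C_k$ — i.e. diagonalizing the nowhere-density witnesses along the interval partition — is the crux; the rest (verifying $f$ is finite-to-one, verifying $f^{-1}(\text{finite})\in\mathcal{I}$ trivially since it is finite, and verifying $f^{-1}(\text{infinite})\notin\mathcal{I}$) is routine. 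Since this is a known characterization, I would in fact simply cite \cite{MR579439} and \cite[Theorem~4.1.2]{MR1350295} rather than reproduce the argument, as the excerpt already does.
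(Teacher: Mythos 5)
The paper offers no proof of this lemma at all---it is stated with a bare citation to Talagrand \cite{MR579439} and to \cite[Theorem 4.1.2]{MR1350295}, which is exactly what you propose in your closing sentence, so at that level you and the paper agree. However, since you did sketch an argument, I must point out that your ``easy'' direction is wrong as written. You claim $\mathcal{I}\subseteq\{X: f(X)\neq\omega\}=\bigcup_k\{X: X\cap f^{-1}(k)=\emptyset\}$ and that each set in this union is closed and nowhere dense. Neither assertion holds. First, $\{X: X\cap f^{-1}(k)=\emptyset\}$ is \emph{clopen}: it is precisely the basic clopen set obtained by fixing the value $0$ on the finite set $f^{-1}(k)$, so it has nonempty interior and is not nowhere dense (your ``flip finitely many coordinates to escape'' argument fails inside a basic open set that already forces all of $f^{-1}(k)$ out of $X$). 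Consequently $\{X:f(X)\neq\omega\}$ is a nonempty open subset of $2^\omega$, hence non-meager by Baire, and containing $\mathcal{I}$ in it would prove nothing. Second, the containment itself is false: the hypothesis only controls \emph{full} preimages $f^{-1}(A)$, and a member of $\mathcal{I}$ may meet every fibre. For instance, take $\mathcal{I}=\mathcal{Z}$ and $f$ constant equal to $k$ on the block $[2^k,2^{k+1})$; then $f^{-1}(A)\in\mathcal{Z}$ if and only if $A$ is finite, yet $X=\{2^k:k\in\omega\}$ has density zero while $f(X)=\omega$.

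The correct formulation of this direction is that every $X\in\mathcal{I}$ contains only finitely many \emph{complete} fibres: if $A:=\{k: f^{-1}(k)\subseteq X\}$ were infinite, then $f^{-1}(A)\subseteq X$ together with $f^{-1}(A)\notin\mathcal{I}$ would give $X\notin\mathcal{I}$. Hence $\mathcal{I}\subseteq\bigcup_n D_n$ where $D_n:=\{X: f^{-1}(k)\not\subseteq X\text{ for all }k\ge n\}$; each $D_n$ is closed, and it is nowhere dense because any finite condition can be extended to contain an entire nonempty fibre $f^{-1}(k)$ with $k\ge n$ avoiding the finitely many coordinates already fixed (such $k$ exist because $f$ is finite-to-one, so infinitely many fibres are nonempty). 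Your sketch of the converse---Talagrand's interval construction, with $f$ constant on each interval $I_k$ and ``contains infinitely many whole $I_k$'s $\Rightarrow$ not in $\mathcal{I}$''---is the right idea, and its details are exactly the content of the cited results, so deferring to \cite{MR579439} and \cite{MR1350295} there is legitimate and matches the paper.
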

In other words, the second condition is $\mathrm{Fin}\le_{\mathrm{RB}}\mathcal{I}$, where $\le_{\mathrm{RB}}$ is the Rudin--Blass ordering. This is sufficient to prove the existence of an uncountable $\mathcal{I}$-$\mathrm{mad}$ family:
\begin{lem}\label{lem:meagerc}
There exists an $\mathcal{I}$-$\mathrm{mad}$ family of cardinality $\mathfrak{c}$, provided $\mathcal{I}$ is meager.
\end{lem}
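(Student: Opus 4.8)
\textbf{Proof proposal for Lemma \ref{lem:meagerc}.}

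The plan is to transfer a classical almost disjoint family of size $\mathfrak{c}$ on $\omega$ to an $\mathcal{I}$-$\mathrm{mad}$ family using the finite-to-one witness supplied by Lemma \ref{lem:meagercharac}. First I would invoke Lemma \ref{lem:meagercharac} to fix a finite-to-one function $f:\omega\to\omega$ such that $f^{-1}(A)\in\mathcal{I}$ exactly when $A$ is finite. Next, recall the standard fact that there is an almost disjoint family $\mathscr{D}\subseteq[\omega]^\omega$ with $|\mathscr{D}|=\mathfrak{c}$ (for instance, identify $\omega$ with the finite binary strings $2^{<\omega}$ and, for each $x\in 2^\omega$, let $D_x$ be the set of initial segments of $x$; distinct branches share only finitely many nodes). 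Pulling back, set $\mathscr{A}_0:=\{f^{-1}(D): D\in\mathscr{D}\}$. Each $f^{-1}(D)$ is infinite (since $D$ is infinite and $f$ is onto... — here one needs $f$ to be onto, or at least that $f^{-1}(D)$ is infinite for infinite $D$, which follows from $f$ being finite-to-one together with the "only if" direction: if $f^{-1}(D)$ were finite then it would lie in $\mathrm{Fin}\subseteq\mathcal{I}$, contradicting nothing directly, so instead use finite-to-one-ness: $f^{-1}(D)=\bigcup_{k\in D}f^{-1}(k)$ and we may assume each fiber is nonempty by discarding non-values of $f$, which does not affect the hypothesis), so $\mathscr{A}_0\subseteq[\omega]^\omega\subseteq\mathcal{I}^+$; moreover for distinct $D,E\in\mathscr{D}$ we have $f^{-1}(D)\cap f^{-1}(E)=f^{-1}(D\cap E)\in\mathcal{I}$ because $D\cap E$ is finite. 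Thus $\mathscr{A}_0$ is an almost-disjoint-modulo-$\mathcal{I}$ family of cardinality $\mathfrak{c}$.

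It remains to enlarge $\mathscr{A}_0$ to an $\mathcal{I}$-$\mathrm{mad}$ family. This is a routine Zorn's lemma argument: the poset of families $\mathscr{A}\subseteq\mathcal{I}^+$ with $A\cap B\in\mathcal{I}$ for all distinct $A,B\in\mathscr{A}$ and $\mathscr{A}\supseteq\mathscr{A}_0$, ordered by inclusion, has upper bounds for chains (take unions), so a maximal element $\mathscr{A}$ exists; by maximality it is $\mathcal{I}$-$\mathrm{mad}$, and $|\mathscr{A}|\ge|\mathscr{A}_0|=\mathfrak{c}$, while $|\mathscr{A}|\le|\mathcal{P}(\omega)|=\mathfrak{c}$, so $|\mathscr{A}|=\mathfrak{c}$. (One should double-check that the defining property of $\mathcal{I}$-$\mathrm{mad}$ stated in the introduction—namely that every $X\in\mathcal{I}^+$ meets some member of $\mathscr{A}$ in an $\mathcal{I}$-positive set—is equivalent to maximality: if no $A\in\mathscr{A}$ had $X\cap A\in\mathcal{I}^+$, then $X\notin\mathscr{A}$ and $\mathscr{A}\cup\{X\}$ would still be almost-disjoint-modulo-$\mathcal{I}$, contradicting maximality; conversely a non-maximal family fails this by considering the witnessing extra set.)

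The only genuinely delicate point is ensuring the pulled-back family lands in $\mathcal{I}^+$ and is almost disjoint modulo $\mathcal{I}$; both hinge on the exact equivalence in Lemma \ref{lem:meagercharac} ($f^{-1}(A)\in\mathcal{I}\iff A$ finite), which gives $f^{-1}(D)\in\mathcal{I}^+$ for infinite $D$ and $f^{-1}(D\cap E)\in\mathcal{I}$ for finite $D\cap E$ simultaneously — so no obstacle beyond bookkeeping remains. A minor technical wrinkle is that $f$ need not be surjective, but replacing $\omega$ (the codomain) by $f[\omega]$ — which is infinite, since $f$ is finite-to-one — and re-indexing a size-$\mathfrak{c}$ almost disjoint family on $f[\omega]$ handles this without changing $\mathcal{I}$.
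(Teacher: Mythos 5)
Your proof is correct, and its core idea --- pulling back an almost disjoint family through the finite-to-one function supplied by Lemma \ref{lem:meagercharac} --- is exactly the paper's. The difference is in how maximality is obtained: the paper starts from a $\mathrm{Fin}$-$\mathrm{mad}$ family $\mathscr{A}$ and asserts that $\{f^{-1}(A):A\in\mathscr{A}\}$ is already $\mathcal{I}$-$\mathrm{mad}$, whereas you pull back a plain (non-maximal) almost disjoint family of branches and then maximalize by Zorn's lemma. Your route is arguably the safer one: checking that maximality survives the pullback (i.e.\ that every $X\in\mathcal{I}^+$ meets some $f^{-1}(A)$ in an $\mathcal{I}$-positive set) requires an argument the paper does not supply, since $f[S]$ infinite does not by itself force $S\in\mathcal{I}^+$; the Zorn step makes this issue moot, at the harmless cost of losing control over \emph{which} sets the final family contains. (Incidentally, the main theorem never uses maximality, only uncountability, $\mathcal{I}$-positivity of the members, and pairwise intersections in $\mathcal{I}$, so either version suffices for the application.) Two small remarks on your write-up: the detour about surjectivity of $f$ is unnecessary --- for infinite $D$ the equivalence of Lemma \ref{lem:meagercharac} gives $f^{-1}(D)\notin\mathcal{I}\supseteq\mathrm{Fin}$ directly, so $f^{-1}(D)$ is automatically infinite and $\mathcal{I}$-positive; and to get $|\mathscr{A}_0|=\mathfrak{c}$ you should note that $D\mapsto f^{-1}(D)$ is injective on $\mathscr{D}$, which follows because $f^{-1}(D)=f^{-1}(E)$ with $D\neq E$ would make $f^{-1}(D)=f^{-1}(D\cap E)$ simultaneously $\mathcal{I}$-positive and in $\mathcal{I}$.
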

\begin{proof}
It is known that there is a $\mathrm{Fin}$-$\mathrm{mad}$ family $\mathscr{A}$ of cardinality $\mathfrak{c}$, cf. \cite{MR1533692}. 
Then, thanks to Lemma \ref{lem:meagercharac}, there exists a finite-to-one function $f:\omega \to \omega$ such that $f^{-1}(A) \in \mathcal{I}$ if and only if $A$ is finite, hence $\{f^{-1}(A):A \in \mathscr{A}\}$ is the claimed $\mathcal{I}$-$\mathrm{mad}$ family.
\end{proof}

Let us prove our main result:
\begin{proof}[Proof of Theorem \ref{thm:complementability}]
Let us suppose for the sake of contradiction that $c_0(\mathcal{I}) \cap \ell_\infty$ is complemented in $\ell_\infty$ and denote by $$\pi:\ell_\infty \to c_0(\mathcal{I}) \cap \ell_\infty$$ the canonical projection. Define $T:=I-\pi$, hence $T$ is bounded linear operator such that $T(x)=0$ for each $x \in c_0(\mathcal{I}) \cap \ell_\infty$. Note also that, if $B \notin \mathcal{I}$, then $\bm{1}_B$ is a bounded sequence which is not $\mathcal{I}$-convergent to $0$, hence $\pi(\bm{1}_B) \neq \bm{1}_B$ and $T(\bm{1}_B) \neq 0$.

At this point, let $(A_j: j \in J)$ be an uncountable $\mathcal{I}$-$\mathrm{mad}$ family, which exists by hypothesis. 
We are going to show that there exists $j \in J$ such that $T(\bm{1}_{A_j})=0$, which is impossible since $A_j \in \mathcal{I}^+$. Indeed, let us suppose that, for each $j \in J$, there exists $x_j=(x_{j,n}) \in \ell_\infty$ supported on $A_j$ with $T(x_j)\neq 0$ and, without loss of generality, $\|x_j\|_\infty=1$. It follows that there exists $m,k \in \omega$ such that $\tilde{J}:=\{j \in J: |x_{j,m}|\ge 2^{-k}\}$ is uncountable. Also, by possibly replacing $x_j$ with $-x_j$, let us suppose without loss of generality that $x_{j,m}>0$ for all $j \in \tilde{J}$.

For each nonempty finite set $F \subseteq \tilde{J}$, define $s_F=(s_{F,n}):=\sum_{j \in F}x_j$. In particular, 
\begin{equation}\label{eq:final}
\|T(s_F)\|_\infty \ge s_{F,m} \ge |F|2^{-k}.
\end{equation}
Note also that $I:=\bigcup (A_i \cap A_j)$, where the sum is extended over all distinct $i,j \in F$, belongs to $\mathcal{I}$. This implies that the sequence $s_F \upharpoonright I$ is $\mathcal{I}$-convergent to $0$, hence $T(s_F)=T(s_F\upharpoonright I^c)$. Therefore
$$
\|T(s_F)\|_\infty=\|T(s_F\upharpoonright I^c)\|_\infty\le \|T\| \cdot \|s_F\upharpoonright I^c\|_\infty \le \|T\|,
$$
which, together with \eqref{eq:final}, implies $|F| \le 2^k\|T\|$. This contradicts the fact the $\tilde{J}$ is infinite.
\end{proof}

\begin{proof}[Proof of Corollary \ref{cor:corsummability}]
There is nothing to prove if $\mathcal{I}=\mathrm{Fin}$. Conversely, fix $I \in \mathcal{I}\setminus \mathrm{Fin}$ and define $X:=\{x \in \ell_\infty: x_i\neq 0\text{ only if }i \in I\}$ and $Y:=X \cap c_0$. It is clear that $$c\subseteq Y\subseteq X\subseteq c(\mathcal{I}) \cap \ell_\infty$$ and that $X$ and $Y$ are isometric to $\ell_\infty$ and $c_0$, respectively. Hence, it is known that $c$ can be projected continuously onto $Y$, let us say through $T$, see \cite{MR0005777}. To conclude the proof, let us suppose that there exists a continuous projection $H: c(\mathcal{I})\cap \ell_\infty \to c$. Then the restriction $T\circ H\upharpoonright X$ is a continuous projection $\ell_\infty \to c_0$. This contradicts Theorem \ref{thm:complementability} (in the case $\mathcal{I}=\mathrm{Fin}$).
\end{proof}

\subsection{Acknowledgments.} The author is grateful to Tommaso Russo (Universit\`a degli Studi di Milano, IT) for suggesting Question \ref{q:original} and Lemma \ref{lem:equivalence}.

\bibliographystyle{amsplain}
\bibliography{ideale}

\end{document}